\renewcommand{\bar}{\overline}
\newcommand{\PP}{\mathbb{P}}
\newcommand{\QQ}{\mathbb{Q}}
\newcommand{\ZZ}{\mathbb{Z}}
\newcommand{\Qp}{\QQ_p}
\newcommand{\Zp}{\ZZ_p}
\newcommand{\Qpbar}{\bar{\QQ}_p}
\newcommand{\ints}{{\mathcal O}}
\newcommand{\maxid}{{\mathcal M}}
\newcommand{\calM}{{\mathcal M}}
\newcommand{\Kbar}{\overline{K}}
\newcommand{\PKbar}{\PP^1(\overline{K})}
\newcommand{\Pkbar}{\PP^1(\overline{k})}
\DeclareMathOperator{\PGL}{PGL}
\DeclareMathOperator{\red}{red}
\theoremstyle{plain}
\newtheorem{thm}{Theorem}[section]
\newtheorem{lemma}[thm]{Lemma}
\newtheorem*{thmblank}{Theorem}
\theoremstyle{definition}
\newtheorem{defin}[thm]{Definition}
\theoremstyle{remark}
\newtheorem{remark}[thm]{Remark}
\numberwithin{equation}{section}
\title[Potentially Good Reduction]
{A Criterion for Potentially Good Reduction
in Non-archimedean Dynamics}
\date{November 30, 2013}
\subjclass[2010]{Primary: 37P05 Secondary: 37P20, 11S82}
\keywords{arithmetic dynamics, good reduction, periodic points}
\author{Robert~L. Benedetto}
\address{Amherst College \\ Amherst, MA 01002}
\email{rlb@math.amherst.edu}
\begin{document}

\newcounter{bean}
\newcounter{sheep}

\begin{abstract}
Let $K$ be a non-archimedean field, and
let $\phi\in K(z)$ be a polynomial or rational function of degree
at least $2$.
We present a necessary and sufficient condition,
involving only the fixed points of $\phi$ and their preimages,
that determines whether or not
the dynamical system $\phi:\PP^1\to\PP^1$ has potentially
good reduction.
\end{abstract}

\maketitle

Fix the following notation throughout this paper.
\begin{tabbing}
\hspace{1cm} \= \hspace{2cm} \=  \kill
\> $K$: \> a field \\
\> $\overline{K}$: \> an algebraic closure of $K$ \\
\> $|\cdot|_v$: \> a non-archimedean absolute value on $\overline{K}$ \\
\> $\ints_K$: \> the ring of integers $\{x\in K : |x|_v \leq 1\}$ of $K$ \\
\> $\maxid_K$: \> the maximal idea $\{x\in K : |x|_v < 1\}$ of $\ints_K$ \\
\> $k$: \> the residue field $\ints_K/ \maxid_K$ of $K$ \\
\> $\ints_{\overline{K}}$: \> the ring of integers of $\overline{K}$ \\
\> $\maxid_{\overline{K}}$: \> the maximal idea of $\ints_{\overline{K}}$ \\
\> $\overline{k}$: \> the residue field of $\overline{K}$
\end{tabbing}
For example, $K$ could be the field $\Qp$ of $p$-adic rationals,
with ring of integers $\Zp$, maximal ideal $p\Zp$, and algebraic
closure $\Qpbar$.  Since $\Qp$ is complete, the absolute value
$|\cdot|_p$ on $\Qp$ extends uniquely to $\Qpbar$.

Let $\phi(z)\in K(z)$ be a rational function.
We define the degree of $\phi=f/g$ to be
$\deg\phi:=\max\{\deg f, \deg g\}$,
where $f,g\in K[z]$ have no common factors.
We will view $\phi$ as a a dynamical system acting on
$\PKbar=\overline{K}\cup\{\infty\}$.
For a thorough treatment of dynamics over such non-archimedean fields,
see \cite{BR}.

If $\deg\phi\geq 2$,
then $\phi$ fixes exactly $1+\deg\phi$ points
in $\PKbar$, counted with appropriate multiplicity.
The \emph{multiplier} $\lambda\in\overline{K}$ of
such a fixed point $x\in\PKbar$ is
\begin{equation}
\label{eq:multdef}
\lambda:=\begin{cases}
\phi'(x) & \text{ if } x\in\overline{K},
\\
(\psi)'(0) & \text{ if } x=\infty, \text{ where }\psi(z)=1/\phi(1/z).
\end{cases}
\end{equation}
If $|\lambda|_v>1$, we say the fixed point $x$ is \emph{repelling};
if $|\lambda|_v=1$, we say $x$ is \emph{indifferent};
and if $|\lambda|_v<1$, we say $x$ is \emph{attracting}.
The multiplier of $x$ is exactly $1$ (as opposed to merely
having absolute value $1$) if and only if $x$ has multiplicity
at least two as a fixed point.
It is $0$ if and only if $x$ is a critical point of $\phi$.

If we change coordinates on $\PKbar$ by applying
a linear fractional transformation $h\in\PGL(2,\overline{K})$,
the effect on $\phi$ is to conjugate by $h$.
In particular,
if $x$ is a fixed point of $\phi$, then $h(x)$ is a fixed point of
$h\circ\phi\circ h^{-1}\in\Kbar(z)$,
and with the same multiplier.
This invariance justifies the definition of the multiplier of
a fixed point at $\infty$ in equation~\eqref{eq:multdef}.

The notion of good reduction of
$\phi\in K(z)$ was first stated in \cite{MS1};
see Definition~\ref{def:goodred}.
We will say that $\phi$ has \emph{potentially good reduction}
if $\phi$ is conjugate over $\Kbar$ to a map of good reduction.
It is easy to see whether or not a given map $\phi\in K(z)$
has good reduction as written, but in general,
it is not so obvious how to tell
whether or not $\phi$ is conjugate to a map of
good reduction.

In \cite{Rum}, Rumely presents two somewhat involved algorithms
for deciding whether or not $\phi$ has potentially good reduction.
A much simpler necessary and sufficient condition for potential
good reduction when $\phi\in K[z]$ is a \emph{polynomial} appeared
in Corollary~4.6 of \cite{Ben4}.
In this paper, we generalize that simpler criterion to apply
to arbitrary rational functions $\phi\in K(x)$, as follows.

\begin{thmblank}
Let $K$ be a field with algebraic closure $\overline{K}$
and with non-archimedean absolute value $|\cdot|_v$ on $\overline{K}$.
Let $\phi\in K(z)$ be a rational function of degree $d\geq 2$.
Let $x_1,\ldots, x_{d+1}\in\PKbar$ be the fixed points of $\phi$,
repeated according to multiplicity.
If any $x_i$ is repelling,
then $\phi$ does not have potentially good reduction.
Otherwise, define $h\in\PGL(2,\Kbar)$ as follows.
\begin{enumerate}
\item
If $x_i$ is indifferent for some $i$, then
it is possible to choose $y_1\in \phi^{-1}(x_i)\smallsetminus\{x_i\}$,
and $y_2\in\phi^{-1}(y_1)$.
The three points $x_i, y_1, y_2\in\PKbar$ are necessarily distinct, and so
there is a unique $h\in\PGL(2,\overline{K})$ such that
$h(x_i)=0$, $h(y_1)=1$, and $h(y_2)=\infty$.
\item
Otherwise, all the fixed points are attracting,
and $x_1$, $x_2$, and $x_3$ are necessarily distinct.
Thus, there is a unique $h\in\PGL(2,\overline{K})$ such that
$h(x_1)=0$, $h(x_2)=1$, and $h(x_3)=\infty$.
\end{enumerate}
Let $\psi = h\circ\phi\circ h^{-1}$.
The original map $\phi$ has potentially good reduction
if and only if $\psi$ has good reduction; in that case,
$h$ provides the desired coordinate change.
\end{thmblank}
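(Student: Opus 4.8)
The plan is to prove the theorem in three stages: the easy implication, the claim about repelling fixed points, and the main implication. Throughout I would replace $\Kbar$ by its completion, write $\bar P\in\Pkbar$ for the reduction of $P\in\PKbar$, and use freely two standard facts about a map $\eta\in\Kbar(z)$ of good reduction (see \cite{BR}): that $\eta$ does not expand the chordal metric, and that on any residue disk $D$ carried by $\eta$ into a residue disk, $\eta|_D$ is, after recentering both disks at integral points, given by a power series with coefficients in $\ints_{\Kbar}$ --- in particular $\overline{\eta(P)}=\bar\eta(\bar P)$ for all $P$, where $\bar\eta\in\kbar(z)$ is the reduction.

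If $\psi$ has good reduction, then $\phi=h^{-1}\circ\psi\circ h$ is $\Kbar$-conjugate to a map of good reduction, so $\phi$ has potentially good reduction. Conversely, if $\phi$ has potentially good reduction, fix $g\in\PGL(2,\Kbar)$ with $\eta:=g\circ\phi\circ g^{-1}$ of good reduction; since $\eta$ is non-expanding, each of its fixed points has multiplier of absolute value $\le 1$, and since the multiplier of a fixed point is a conjugacy invariant, so does each $x_i$. Hence a repelling fixed point obstructs potentially good reduction, and otherwise every $x_i$ is indifferent or attracting. The distinctness claims then follow quickly: a fixed point whose only preimage is itself is totally ramified, hence a critical --- and so super-attracting --- fixed point, which forces a choice of $y_1\in\phi^{-1}(x_i)\smallsetminus\{x_i\}$ when $x_i$ is indifferent; then $y_1$ is not fixed, hence neither is $y_2$, so $x_i,y_1,y_2$ are distinct; and if every fixed point is attracting then each has multiplier $\ne 1$, hence is simple, so all $d+1$ of them, and in particular $x_1,x_2,x_3$, are distinct.

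The main implication is the heart of the matter. Assume $\phi$ has potentially good reduction and fix $g$ with $\eta=g\circ\phi\circ g^{-1}$ of good reduction. Put $m:=h\circ g^{-1}$, so $\psi=m\circ\eta\circ m^{-1}$. Since conjugating a map of good reduction by an element of $\PGL(2,\ints_{\Kbar})$ yields a map of good reduction, it suffices to prove $m\in\PGL(2,\ints_{\Kbar})$; and $m\in\PGL(2,\ints_{\Kbar})$ holds exactly when the three points $m^{-1}(0)$, $m^{-1}(1)$, $m^{-1}(\infty)$ have pairwise distinct reductions in $\Pkbar$ --- if they do, a primitive integral representative of $m^{-1}$ cannot have non-unit determinant, for otherwise its reduction would be a rank-one matrix collapsing at least two of them; the converse is immediate. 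Since $m^{-1}=g\circ h^{-1}$, those three points are $X:=g(x_i)$, $Y_1:=g(y_1)$, $Y_2:=g(y_2)$ in case (1), and $g(x_1),g(x_2),g(x_3)$ in case (2). Here $X$ is an indifferent fixed point of $\eta$, $Y_1\in\eta^{-1}(X)\smallsetminus\{X\}$, and $Y_2\in\eta^{-1}(Y_1)$; in case (2) the $g(x_j)$ are distinct attracting fixed points of $\eta$. Thus the whole theorem reduces to the assertion that, for a map $\eta$ of good reduction, such a configuration of three points has pairwise distinct reductions.

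This last assertion is the step I expect to require the most care. In case (1), $\overline{\eta(P)}=\bar\eta(\bar P)$ reduces the equalities $\bar X=\bar Y_2$ and $\bar Y_1=\bar Y_2$ to the equality $\bar X=\bar Y_1$: from $\bar X=\bar Y_2$ one gets $\bar Y_1=\bar\eta(\bar Y_2)=\bar\eta(\bar X)=\bar X$, and from $\bar Y_1=\bar Y_2$ one gets $\bar X=\bar\eta(\bar Y_1)=\bar\eta(\bar Y_2)=\bar Y_1$. To rule out $\bar X=\bar Y_1$: were they equal, $\bar X$ would be a fixed point of $\bar\eta$, so $\eta$ would carry the residue disk $D$ at $\bar X$ into itself; recentering $D$ at the integral point $X$, the map becomes an $\ints_{\Kbar}$-power series vanishing at $0$, and since $X$ is indifferent its linear coefficient has absolute value $1$, whence a routine ultrametric estimate gives that $\eta$ is an isometry on $D$ --- impossible, as the distinct points $X,Y_1\in D$ both map to $X$. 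Case (2) uses the same mechanism: if two distinct fixed points $X_1,X_2$ of $\eta$ had $\bar X_1=\bar X_2$, then recentering the residue disk at $X_1$ produces an $\ints_{\Kbar}$-power series $\tilde\eta(w)$ with $\tilde\eta(0)=0$ and $\tilde\eta(w_2)=w_2$ for the nonzero $w_2:=X_2-X_1\in\maxid_{\Kbar}$, forcing $|\tilde\eta'(0)-1|_v<1$ and hence $|\eta'(X_1)|_v=1$, contradicting that $X_1$ is attracting. This gives the configuration assertion, and with it the theorem; the only remaining point is that fixed points or preimages at $\infty$ are treated uniformly, every notion used being invariant under $\PGL(2,\ints_{\Kbar})$.
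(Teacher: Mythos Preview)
Your proof is correct and follows essentially the same approach as the paper: reduce the main implication to showing that the three chosen points, after transport by a good-reduction conjugating map, lie in distinct residue classes, and then verify this by local analysis near a nonrepelling fixed point. The paper isolates this local analysis into separate lemmas (its Lemmas~\ref{lem:norepgood}--\ref{lem:indiffgood}) and uses a contraction estimate in the attracting case and a ``no other zeros'' estimate in the indifferent case, whereas you inline the arguments and use the equivalent isometry and multiplier-must-be-a-unit tactics; these are interchangeable variations on the same idea.
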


If $\phi$ does have potentially good reduction, the map
$h\in\PGL(2,\Kbar)$ of the above Theorem is defined over
an extension of degree at most $d^3-d$ over $K$, \emph{a priori}.
After all, the fixed points of $\phi$ are all roots of a polynomial
of degree $d+1$, and the preimages of any given point of
$\PKbar$ are roots of a polynomial of degree $d$.  In
case (a) above, then, $x_i$ contributes degree up to $d+1$,
$y_1$ contributes up to $d-1$, and $y_2$ contributes up to $d$.
Similarly, in case (b), $x_0$, $x_1$, and $x_2$
contribute up to $d+1$, $d$, and $d-1$.
However, in a separate paper \cite{BenPG2},
we will show that this bound
may be reduced from $d^3-d$ to $\max\{d+1,d^2-d\}$.

We will recall some general facts about good reduction in
Section~\ref{sec:reduc}.  Then, in Section~\ref{sec:proof},
we will state some simple lemmas and prove the Theorem.

\section{Reduction of rational maps}
\label{sec:reduc}

Given a polynomial $f(z)\in \ints_{\overline{K}}[z]$, denote by
$\overline{f}(z)\in \overline{k}[z]$ the polynomial formed by
reducing all coefficients of $f$ modulo $\maxid_{\overline{K}}$.

\begin{defin}
\label{def:goodred}
Let $\phi(z)\in K(z)$ be a rational function
of degree $d\geq 1$.  Write $\phi=f/g$ with
$f,g\in\ints_K[z]$ and with at least one coefficient
of $f$ or $g$ having absolute value $1$.
Let $\overline{\phi}:=\overline{f}/\overline{g}$.
We say that $\phi$ has {\em good reduction}
if $\deg \overline{\phi} = \deg \phi$.
Otherwise,
we say that $\phi$ has \emph{bad reduction}.

We say that $\phi$ has \emph{potentially good reduction}
if there is some $h\in\PGL(2,\overline{K})$ such that
$h\circ\phi\circ h^{-1}\in\overline{K}(z)$ has good reduction.
\end{defin}

There is some inconsistency in the literature as to whether
``bad reduction'' of a dynamical system $\phi$ should mean
the lack of good reduction, or, more restrictively, the lack
of good reduction even after a $K$-rational change of coordinates.
For example, if $a\in K$ is a uniformizer, then the map
$\phi(z) = az^2$ has bad reduction as written, but its
$K$-rational conjugate $a\phi(a^{-1}z)=z^2$ has good reduction.
Meanwhile, the map $\psi(z)=az^3$ also has bad reduction,
and in fact it can be shown to have bad reduction even after
any $K$-rational coordinate change.  However, $\psi$ has
potentially good reduction, because
$a^{1/2}\psi(a^{-1/2}z) = z^3$ has good reduction.

Given $\phi\in K(z)$, it is easy to check that polynomials
$f,g\in\ints_K[z]$ exist as specified in Definition~\ref{def:goodred},
and that the reduction type of $\phi$ does not change if
two other such polynomials
$\tilde{f},\tilde{g}\in\ints_K[z]$ are chosen in their stead.
Note that $\deg\overline{\phi}\leq \deg\phi$, but 
the degree could drop if the formerly relatively
prime $f,g\in \ints_K[z]$ gain a common factor when
they are reduced, or if
$\max\{\deg\overline{f},\deg\overline{g}\}$
is strictly smaller than $\max\{\deg f,\deg g\}$.

The reduction map $\red:\ints_{\overline{K}}\to \overline{k}$
induces a map $\red:\PKbar\to\Pkbar$, which coincides with the
original reduction map on $\ints_{\overline{K}}$ and maps
$\PKbar\smallsetminus\ints_{\overline{K}}$ to the point
$\overline{\infty}\in\Pkbar$.  It is easy to check that
a rational function $\phi\in\Kbar(z)$ has good reduction
if and only if it respects the reduction map, i.e., if
$$\overline{\phi}\big(\bar{x}\big) = \overline{\phi(x)}
\quad\text{for all}\quad x\in\PKbar.$$
As a result, the composition of two maps of good reduction
again has good reduction.
In addition, among maps of degree~1, i.e., in $\PGL(2,\Kbar)$,
the good reduction maps are precisely the elements of
$\PGL(2,\ints_{\Kbar})$.


\begin{lemma}
\label{lem:multabs}
Let $\psi\in\Kbar(z)$ have good reduction, and write $\psi=f/g$ 
with $f,g\in \ints_{\Kbar}[z]$ as in Definition~\ref{def:goodred}.
Suppose that $0$ is a fixed point of $\psi$,
and let $\lambda$ be the associated multiplier.
Then $f(0)=0$, $|g(0)|_v = 1$, and $|\lambda|_v = |f'(0)|_v$.
\end{lemma}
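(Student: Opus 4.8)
The plan is to verify the three assertions in turn; good reduction is needed only for the middle one, and I do not anticipate a real obstacle.

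First I would note that since $0$ is a fixed point, $\psi(0)=0\neq\infty$, so $0$ is not a pole of $\psi$. Because $f$ and $g$ have no common factor they cannot both vanish at $0$, and not being a pole means $g(0)\neq 0$; then $\psi(0)=f(0)/g(0)=0$ forces $f(0)=0$. That is the first claim.

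Next, for $|g(0)|_v=1$, I would reduce modulo $\maxid_{\Kbar}$. From $f(0)=0$ we get $\bar{f}(0)=\overline{f(0)}=0$ in $\kbar$, so $\bar{z}$ divides $\bar{f}$ in $\kbar[z]$. The point where good reduction enters is that $\bar{f}$ and $\bar{g}$ must be coprime in $\kbar[z]$: a common factor of degree $e\geq 1$ would give $\deg\bar{\psi}=\max\{\deg\bar{f},\deg\bar{g}\}-e\leq\max\{\deg f,\deg g\}-e<\deg\psi$, contradicting $\deg\bar{\psi}=\deg\psi$ (this is exactly one of the two degree-dropping mechanisms mentioned after Definition~\ref{def:goodred}). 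Hence $\bar{z}\nmid\bar{g}$, i.e.\ $\bar{g}(0)\neq 0$, which says precisely $|g(0)|_v=1$. This is the main — though only mildly delicate — step.

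Finally, for the multiplier I would differentiate directly: by the quotient rule and $f(0)=0$,
\[
\lambda=\psi'(0)=\frac{f'(0)g(0)-f(0)g'(0)}{g(0)^2}=\frac{f'(0)}{g(0)},
\]
so taking $|\cdot|_v$ and using $|g(0)|_v=1$ from the previous step yields $|\lambda|_v=|f'(0)|_v$. The only ingredient that is not a routine computation is the coprimality of $\bar{f}$ and $\bar{g}$ invoked in the second step, and that is immediate from the definition of good reduction, so I expect no genuine difficulty.
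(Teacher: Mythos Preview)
Your proof is correct and follows essentially the same route as the paper's: use coprimality of $f,g$ to get $f(0)=0$ and $g(0)\neq 0$, invoke good reduction to get coprimality of $\bar{f},\bar{g}$ and hence $|g(0)|_v=1$, and then compute $\lambda$ via the quotient rule. The only difference is that you spell out the coprimality-of-reductions argument more explicitly than the paper does.
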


\begin{proof}
Since $f$ and $g$ are relatively prime, we have
$f(0)=0$ and $g(0)\neq 0$.  However, because
$\bar{f},\bar{g}\in\ints_{\Kbar}[z]$ are also relatively prime,
we must in fact have $|g(0)|_v=1$.  Thus,
\[
|\lambda|_v = \Big|\frac{f'(0)g(0) - f(0)g'(0)}{g(0)^2}\Big|_v
= \frac{|f'(0) g(0)|_v}{|g(0)|_v^2} = |f'(0)|_v .
\qedhere
\]
\end{proof}

\section{Proof of the Theorem}
\label{sec:proof}

The following lemmas are well known in the more general setting
of dynamics on a non-archimedean disk, but their proofs in our
context are very short and simple, and so we include
them for the convenience of the reader.

\begin{lemma}
\label{lem:norepgood}
Let $\psi\in \Kbar(z)$ be a rational function
of good reduction, and let $x\in\PKbar$ be a fixed point.
Then $x$ is indifferent or attracting.
\end{lemma}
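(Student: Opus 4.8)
The plan is to reduce to the normalized situation already handled by Lemma~\ref{lem:multabs}. First I would conjugate so that the fixed point in question sits at the origin. Since $\psi$ has good reduction and $h\in\PGL(2,\ints_{\Kbar})$ preserves good reduction (as noted at the end of Section~\ref{sec:reduc}), I need an $h\in\PGL(2,\ints_{\Kbar})$ with $h(x)=0$. Such an $h$ exists because the reduction map $\red:\PKbar\to\Pkbar$ is surjective and $\PGL(2,\ints_{\Kbar})$ acts transitively on residue classes in a compatible way; concretely, if $x\in\ints_{\Kbar}$ one takes $h(z)=z-x$, and if $x$ is a point reducing to $\bar\infty$ one first applies $z\mapsto 1/z$ (which is in $\PGL(2,\ints_{\Kbar})$) to move it into $\ints_{\Kbar}$, then translates. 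In all cases the resulting $h$ has integral entries and integral inverse, so $\psi':=h\circ\psi\circ h^{-1}$ still has good reduction, fixes $0$, and has the same multiplier $\lambda$ at $0$ as $\psi$ has at $x$.

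Next I would apply Lemma~\ref{lem:multabs} to $\psi'$: writing $\psi'=f/g$ as in Definition~\ref{def:goodred}, it gives $|\lambda|_v=|f'(0)|_v$. Since $f\in\ints_{\Kbar}[z]$, its derivative $f'$ also lies in $\ints_{\Kbar}[z]$, so $|f'(0)|_v\le 1$. Hence $|\lambda|_v\le 1$, which says precisely that $x$ is indifferent or attracting, never repelling.

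I expect the only subtle point to be the reduction to the case $x=0$ via an integral coordinate change — specifically checking that one can always move a fixed point to the origin by an element of $\PGL(2,\ints_{\Kbar})$ rather than merely of $\PGL(2,\Kbar)$, and that this preserves both good reduction and the multiplier. Good reduction is preserved because composition of good-reduction maps has good reduction and elements of $\PGL(2,\ints_{\Kbar})$ are exactly the degree-one maps of good reduction; the multiplier is preserved by the general conjugation-invariance of multipliers recalled in the introduction. Once these bookkeeping facts are in place, the heart of the argument is just the trivial observation $|f'(0)|_v\le 1$ for $f\in\ints_{\Kbar}[z]$, combined with Lemma~\ref{lem:multabs}.
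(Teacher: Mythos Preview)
Your proof is correct and follows essentially the same route as the paper's: move the fixed point to $0$ via an element of $\PGL(2,\ints_{\Kbar})$, then invoke Lemma~\ref{lem:multabs} together with $f'\in\ints_{\Kbar}[z]$ to get $|\lambda|_v=|f'(0)|_v\le 1$. One small notational caveat: you use $\psi'$ for the conjugated map, which collides with the derivative notation $\psi'(0)$ used elsewhere in the paper, so consider a different symbol.
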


\begin{proof}
There is some degree one map $h\in\PGL(2,\ints_{\Kbar})$
of good reduction such that $h(x)=0$.  Since conjugating by $h$
preserves good reduction and multipliers, we may assume without
loss that $x=0$.
Let $\lambda:=\psi'(0)$ be the associated multiplier.

Write $\psi=f/g$ as in Definition~\ref{def:goodred}.
Then $f'\in\ints_{\Kbar}[z]$, since $f\in\ints_{\Kbar}[z]$.
Thus, by Lemma~\ref{lem:multabs}, we have
$|\lambda|_v = |f'(0)|_v\leq 1$.
\end{proof}

\begin{lemma}
\label{lem:attrgood}
Let $\psi\in\Kbar(z)$ be a rational function
of good reduction, and let $x\in\PKbar$ be an attracting fixed point
of $\psi$.
Then for any other fixed point $y\in\PKbar$ of $\psi$,
the reductions $\red(x),\red(y)\in\Pkbar$ are distinct.
\end{lemma}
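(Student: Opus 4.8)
The plan is to reduce to the normalized situation $x = 0$, just as in the proof of Lemma~\ref{lem:norepgood}. Pick $h \in \PGL(2,\ints_{\Kbar})$ of good reduction with $h(x) = 0$; conjugating by $h$ preserves good reduction, multipliers, and the reduction map, so it suffices to treat the case $x = 0$, where $0$ is an attracting fixed point, i.e.\ its multiplier $\lambda$ satisfies $|\lambda|_v < 1$. I must show that any other fixed point $y$ has $\red(y) \neq \red(0) = \bar 0$, equivalently $|y|_v < 1$ is impossible unless $y = 0$ — more precisely, that no fixed point other than $0$ reduces to $\bar 0$.

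The key computation is to look at $f$ and $g$ near $0$. Write $\psi = f/g$ with $f, g \in \ints_{\Kbar}[z]$ coprime and a unit coefficient somewhere, as in Definition~\ref{def:goodred}. By Lemma~\ref{lem:multabs}, $f(0) = 0$, $|g(0)|_v = 1$, and $|\lambda|_v = |f'(0)|_v < 1$; hence the reduction $\bar f(z) \in \kbar[z]$ has a zero of order at least $2$ at $\bar 0$, while $\bar g(\bar 0) \neq 0$. Now suppose $y \neq 0$ is a fixed point with $\red(y) = \bar 0$, i.e.\ $y \in \maxid_{\Kbar}$. The fixed-point equation gives $f(y) - y\,g(y) = 0$. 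Consider the polynomial $F(z) := f(z) - z\,g(z) \in \ints_{\Kbar}[z]$: its reduction is $\bar F(z) = \bar f(z) - z\,\bar g(z)$, which at $\bar 0$ has $\bar F(\bar 0) = 0$ and $\bar F'(\bar 0) = \bar f'(\bar 0) - \bar g(\bar 0) = -\bar g(\bar 0) \neq 0$, since $\bar f'(\bar 0) = 0$. So $\bar 0$ is a \emph{simple} root of $\bar F$. But $0$ and $y$ are two distinct roots of $F$ both reducing to $\bar 0$; this forces $\bar F$ to have a root of multiplicity at least $2$ at $\bar 0$ (two roots of $F$ in $\maxid_{\Kbar}$ collapse to one root of $\bar F$ of higher multiplicity, using that $F$ does not vanish identically and that we may track multiplicities via the Newton polygon or Weierstrass factorization of $F$ over $\ints_{\Kbar}$), a contradiction. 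One must also handle the possibility $\deg F < \deg$ expected or $F \equiv 0$: $F \equiv 0$ would make every point fixed, contradicting $\deg\psi \geq 2$; a drop in degree only removes roots ``at $\infty$,'' which do not reduce to $\bar 0$.

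The main obstacle is the step asserting that two distinct roots of $F$ in the maximal ideal force $\bar F$ to have a double root at $\bar 0$. This is where I must be a little careful: it is not literally true that $\overline{F}$ "sees" all roots with their multiplicities unless $F$ is suitably normalized (it could lose roots that run off to the residue disk at infinity, but those do not reduce to $\bar 0$, so that is harmless here). The clean way is to factor, over $\ints_{\Kbar}$, the part of $F$ supported near $0$: writing $F(z) = c \prod (z - \alpha_j) \cdot (\text{unit-valued factor})$ for the roots $\alpha_j$ in $\maxid_{\Kbar}$, one sees $\bar F(z)$ is divisible by $z^{\#\{j\}}$ up to a nonzero constant, so having $\alpha = 0$ and $\alpha = y$ both among the $\alpha_j$ gives $z^2 \mid \bar F(z)$, contradicting $\bar F'(\bar 0) \neq 0$. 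Once that is pinned down, the lemma follows.
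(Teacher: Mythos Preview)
Your argument is correct, but the paper proceeds differently.  After reducing to $x=0$ exactly as you do, the paper gives a one-line dynamical estimate: writing $f(z)=a_1 z+\cdots+a_d z^d$ and $g(z)=b_0+\cdots+b_d z^d$, for any $y$ with $0<|y|_v<1$ the ultrametric inequality together with $|a_1|_v<1$ and $|b_0|_v=1$ yields
\[
|\psi(y)|_v = |y|_v\cdot\frac{|a_1+a_2 y+\cdots+a_d y^{d-1}|_v}{|b_0+\cdots+b_d y^d|_v} < |y|_v,
\]
so $y$ cannot be fixed.  This is shorter and in fact proves more: the entire residue disk of $0$ is strictly contracted toward $0$.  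Your route is algebraic: you pass to the fixed-point polynomial $F=f-zg$, observe that $\bar F$ has a \emph{simple} zero at $\bar 0$, and then argue that two roots of $F$ in $\maxid_{\Kbar}$ would force $z^2\mid\bar F$.  That works, and the cleanest way to nail down your ``obstacle'' is simply to write $F(z)=z(z-y)Q(z)$ and note that multiplicativity of the Gauss norm forces $Q\in\ints_{\Kbar}[z]$, whence $\bar F(z)=z^2\,\bar Q(z)$; this is crisper than your sketch with a ``unit-valued factor.''  Both approaches are fine: the paper's is a bare-hands estimate with no auxiliary lemma, while yours makes the root-counting in a residue disk explicit and would generalize naturally to counting all fixed points in a given residue class.
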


\begin{proof}
By a $\PGL(2,\ints_{\Kbar})$ change of coordinates, we may assume that
$x=0$.  Write $\psi=f/g$ as in Definition~\ref{def:goodred},
and let $\lambda:=\psi'(0)$ be the associated multiplier.
By Lemma~\ref{lem:multabs}, we have $f(0)=0$,
$|g(0)|_v=1$, and $|f'(0)|_v=|\lambda|_v<1$.  That is,
$f(z) = a_1 z + \cdots + a_d z^d$ with $|a_i|_v\leq 1$
and $|a_1|_v<1$, whereas
$g(z) = b_0 + b_1 z + \cdots + b_d z^d$ with $|b_i|_v\leq 1$
and $|b_0|_v=1$.

For any $y\in\PKbar$ with $\red(y)=\red(0)$, we have $|y|_v<1$.  Thus,
if $y\neq 0$, we have
$$|\psi(y)|_v
= |y|_v\cdot \frac{|a_1 + \cdots + a_d y^{d-1}|_v}{|b_0 + \cdots + b_d y^d|_v}
= |y|_v\cdot \frac{|a_1 + \cdots + a_d y^{d-1}|_v}{|b_0|_v}
< |y|_v,$$
and hence $y$ cannot be fixed.
\end{proof}

\begin{lemma}
\label{lem:indiffgood}
Let $\psi\in\Kbar(z)$ be a rational function
of good reduction and degree $d\geq 2$,
and let $x\in\PKbar$ be an indifferent fixed point
of $\psi$.  Then $\psi^{-1}(x)\smallsetminus\{x\}$
is nonempty, and $\red(x)\neq \red(y)$
for any $y\in\psi^{-1}(x)\smallsetminus\{x\}$.
\end{lemma}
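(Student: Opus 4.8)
The plan is to reduce to the normalized situation $x=0$ by a $\PGL(2,\ints_{\Kbar})$ change of coordinates, which preserves both good reduction and the reduction map. Write $\psi=f/g$ as in Definition~\ref{def:goodred}. By Lemma~\ref{lem:multabs}, $f(0)=0$, $|g(0)|_v=1$, and $|\lambda|_v=|f'(0)|_v=1$ since $x$ is indifferent. So $f(z)=a_1z+\cdots+a_dz^d$ with $|a_1|_v=1$ and all $|a_i|_v\le 1$, while $g(z)=b_0+\cdots+b_dz^d$ with $|b_0|_v=1$ and all $|b_i|_v\le 1$. The points $y$ with $\red(y)=\red(0)$ are exactly those with $|y|_v<1$, so I must show (i) there is such a $y\ne 0$ with $\psi(y)=0$, and (ii) no such $y\ne 0$ is a preimage of $0$. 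Wait — that's backwards: (ii) is the conclusion $\red(y)\ne\red(0)$ for $y\in\psi^{-1}(0)\setminus\{0\}$, but (i) asks for a preimage of $0$ other than $0$ itself. The resolution is that such a preimage must reduce to something \emph{other} than $\overline{0}$.

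Here is the order I would carry things out. Step 1: show $\psi^{-1}(0)\setminus\{0\}\ne\emptyset$. The equation $\psi(y)=0$ means $f(y)=0$, i.e. $y=0$ or $a_1+a_2y+\cdots+a_dy^{d-1}=0$; the latter polynomial has degree between $1$ and $d-1$ (it is not identically zero since $a_1\ne 0$ — but I should also rule out the degenerate case where $f(z)=a_1 z$ exactly, in which case $\psi^{-1}(0)\setminus\{0\}$ could be empty in $\Kbar$ unless... no, if $f=a_1z$ then since $\deg\psi=d\ge 2$ we need $\deg g=d$, and $\psi^{-1}(0)=\{0\}\cup\{\infty\text{ if }\deg g\le d-1\}$; actually $f(y)=0$ gives only $y=0$). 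So the genuinely delicate point is that $\infty$ may be the only other preimage of $0$. But $\red(\infty)=\overline{\infty}\ne\overline{0}$, so that case is fine for (ii) and also shows $\psi^{-1}(0)\setminus\{0\}\ne\emptyset$ as long as some preimage exists at all — which it does, since $\deg\psi\ge 2$ forces $\psi$ to be surjective on $\PKbar$ and $\#\psi^{-1}(0)\ge 2$ unless $0$ is a totally ramified value, in which case $\psi^{-1}(0)=\{0\}$ with multiplicity $d$, forcing $f'(0)=0$, contradicting $|f'(0)|_v=1$. This last observation is the cleanest route and I would lead with it.

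Step 2 (the conclusion): suppose $y\in\psi^{-1}(0)\setminus\{0\}$ and, for contradiction, $\red(y)=\overline{0}$, i.e. $|y|_v<1$. Then $f(y)=0$ with $y\ne 0$ gives $a_1+a_2y+\cdots+a_dy^{d-1}=0$, hence $|a_1|_v=|a_2y+\cdots+a_dy^{d-1}|_v\le\max_i|a_i|_v|y|_v\le|y|_v<1$, contradicting $|a_1|_v=1$. This is the heart of the argument and it is short. The main obstacle, such as it is, is Step 1 — correctly handling the possibility that the only preimage of $0$ besides $0$ itself is $\infty$ (or that $0$ is totally ramified); I expect the totally-ramified-value argument via $|f'(0)|_v=1$ to dispose of it in one line, so the lemma should go through cleanly.
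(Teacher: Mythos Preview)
Your proposal is correct and follows essentially the same route as the paper: after moving $x$ to $0$ by a $\PGL(2,\ints_{\Kbar})$ change of coordinates and invoking Lemma~\ref{lem:multabs}, the paper likewise notes that $|a_1|_v=1$ forces $f$ (hence $\psi$) to have no roots in $\maxid_{\Kbar}\smallsetminus\{0\}$, and that $\lambda\neq 0$ means $0$ is not a critical point, so $\psi^{-1}(0)\smallsetminus\{0\}\neq\emptyset$. Your Step~2 contradiction is exactly the content of the paper's ``it is immediate'' claim, and your ``not totally ramified because $|f'(0)|_v=1$'' is the same as the paper's ``not a critical point'' observation.
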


\begin{proof}
By a $\PGL(2,\ints_{\Kbar})$ change of coordinates, we may assume that
$x=0$.  Write $\psi=f/g$ as in Definition~\ref{def:goodred},
and let $\lambda:=\psi'(0)$ be the associated multiplier.
By Lemma~\ref{lem:multabs}, we have $f(0)=0$,
$|g(0)|_v=1$, and $|f'(0)|_v=|\lambda|_v=1$.  Thus,
$f(z) = a_1 z + \cdots + a_d z^d$ with $|a_i|_v\leq 1$
and $|a_1|_v=1$.
It is immediate that $f$, and hence $\psi$, has no roots in
$\calM_{\Kbar}\smallsetminus\{0\}$.

Meanwhile, the hypothesis on $\psi'(0)$ also implies that
$0$ is not a critical point.
Since $d\geq 2$, $\psi^{-1}(0)\smallsetminus\{0\}$ is nonempty.
Given $y\in\psi^{-1}(0)\smallsetminus\{0\}$,
the previous paragraph shows that $y\not\in\calM_{\Kbar}$,
and hence $\red(y)\neq\red(0)$.
\end{proof}

\begin{remark}
Lemmas~\ref{lem:attrgood} and~\ref{lem:indiffgood} are special
cases more general statements about fixed points of a map
$\phi\in K(z)$ of good reduction.  Specifically, if $x$ is
an attracting fixed point (or more generally, periodic point)
of such a map, then the residue class of $x$ contains no other
periodic points, although it almost certainly contains many
other \emph{preperiodic} points that ultimately map to $x$
itself.  Similarly, if $x$ is an indifferent periodic point
of a map of good reduction, then the residue class of
$x$ maps bijectively onto itself, and hence the only preperiodic
points in the residue class are in fact periodic.
(In fact, there are usually infinitely many periodic points in such a
residue class.)

Furthermore, these facts about preperiodic points near nonrepelling periodic
points were generalized by Rivera-Letelier in his thesis \cite{Riv1}
to maps of bad reduction, by replacing residue 
classes by an appropriate notion of Fatou components.
However, we will only need the far simpler statements of
Lemmas~\ref{lem:attrgood} and~\ref{lem:indiffgood} here.
\end{remark}

\begin{proof}[Proof of the Theorem]
For each $i=1,\ldots,d+1$, let $\lambda_i\in\Kbar$ be the
multiplier of the fixed point $x_i$.
If $\phi$ has potentially good reduction, then there is some
$h\in\PGL(2,\overline{K})$ such that $h\circ\phi\circ h^{-1}$
has good reduction.  The fixed points of
$h\circ\phi\circ h^{-1}$ have the same multipliers as those
of $\phi$, and therefore, by Lemma~\ref{lem:norepgood},
we have $|\lambda_i|_v \leq 1$ for all $i$, proving the
first statement of the Theorem.
We now consider two cases.

\textbf{Case 1}.
If $|\lambda_i|_v=1$ for some $i$, then the associated fixed point
$x_i$ is not a critical point (since $\lambda_i\neq 0$), and hence
$\phi^{-1}(x_i)\smallsetminus\{x_i\}$ is nonempty.  Pick 
$y_1\in \phi^{-1}(x_i)\smallsetminus\{x_i\}$, and pick
$y_2\in \phi^{-1}(y_1)$.  Then $y_1\neq x_i$ by definition,
and hence $y_2\neq y_1$ also.  Moreover, $y_2\neq x_i$, because
if $y_2=x_i$, then taking $\phi$ of both gives $y_1=x_i$.
Let $h$ be the unique map in $\PGL(2,\Kbar)$ with
$h(x_i)=0$, $h(y_1)=1$, and $h(y_2)=\infty$,
and let $\psi:=h\circ\phi\circ h^{-1}$
If $\psi$ has good reduction, then $\phi$ has
potentially good reduction, and we are done.

Conversely, if $\phi$ has potentially good reduction,
there is some $\tilde{h}\in\PGL(2,\Kbar)$ such that
$\tilde{\psi}:=\tilde{h} \circ\phi\circ\tilde{h}^{-1}$
has good reduction.  
By Lemma~\ref{lem:indiffgood}, the indifferent fixed point
$\tilde{h}(x_i)$ of $\tilde{\psi}$ and its preimage
$\tilde{h}(y_1)$ (under $\tilde{\psi}$) have different reductions
in $\Pkbar$.  Since $\tilde{\psi}$ respects reduction,
we must have $\bar{\tilde{\psi}}(\overline{\tilde{h}(y_2)})
= \overline{\tilde{h}(y_1)}$, and for the same reasons as in the previous
paragraph, the reductions
$\tilde{h}(x_i)$, $\tilde{h}(y_1)$, and $\tilde{h}(y_2)$
must all be distinct points in $\Pkbar$.  Thus, there is
some map $r\in\PGL(2,\ints_{\Kbar})$ such that
$$r\big(\tilde{h}(x_i)\big) = 0,
\quad
r\big(\tilde{h}(y_1)\big) = 1,
\quad\text{and}\quad
r\big(\tilde{h}(y_2)\big)=\infty.$$
Then we must have $r\circ \tilde{h}=h$,
and $\psi=r\circ\tilde{\psi}\circ r^{-1}$.
Since $\tilde{\psi}$, $r$, and $r^{-1}$ all have good
reduction, it follows that $\psi$ also has good reduction.

\textbf{Case 2}.
The remaining case is that no $x_i$ is repelling or indifferent,
and hence they are all attracting.  That is,
$|\lambda_i|_v<1$ for all $i$.  Then no $\lambda_i$
is $1$, and hence the $d+1$ fixed points all
have multiplicity $1$ and therefore are distinct.
In particular, $x_1$, $x_2$, and $x_3$ are all distinct.
Let $h$ be the unique map in $\PGL(2,\Kbar)$ with
$h(x_1)=0$, $h(x_2)=1$, and $h(x_3)=\infty$,
and let $\psi:=h\circ\phi\circ h^{-1}$.
If $\psi$ has good reduction, then $\phi$ has
potentially good reduction, and we are done.

Conversely, if $\phi$ has potentially good reduction,
there is some $\tilde{h}\in\PGL(2,\Kbar)$ such that
$\tilde{\psi}:=\tilde{h} \circ\phi\circ\tilde{h}^{-1}$
has good reduction.  
By Lemma~\ref{lem:attrgood}, the attracting fixed points
$\tilde{h}(x_1)$, $\tilde{h}(x_2)$, and $\tilde{h}(x_3)$
of $\tilde{\psi}$ all have different reductions
in $\Pkbar$.  Thus, there is
some map $r\in\PGL(2,\ints_{\Kbar})$ such that
$$r\big(\tilde{h}(x_1)\big) = 0,
\quad
r\big(\tilde{h}(x_2)\big) = 1,
\quad\text{and}\quad
r\big(\tilde{h}(x_3)\big)=\infty.$$
Then we must have $r\circ \tilde{h}=h$,
and $\psi=r\circ\tilde{\psi}\circ r^{-1}$.
Since $\tilde{\psi}$, $r$, and $r^{-1}$ all have good
reduction, it follows that $\psi$ also has good reduction.
\end{proof}

{\bf Acknowledgements.}
The author thanks Xander Faber and Joseph Silverman for their
helpful suggestions.
The author also
gratefully acknowledges the support of NSF grant DMS-1201341.

\end{document}